\newtheorem{theorem}{Теорема}[section]
\theoremstyle{definition}
\newtheorem{remark}{Зауваження}[section]
\newtheorem{definition}{Означення}
\DeclareMathOperator{\R}{\text{$\mathbb{R}$}\;\!}
\DeclareMathOperator{\F}{\text{$\mathfrak{F}$}\;\!}
\DeclareMathOperator{\B}{\text{$\mathfrak{B}$}\;\!}
\DeclareMathOperator{\al}{\text{$\alpha$}\;\!}
\author{Селютін Дмитро}
\title{Про диференціювання відносно фільтрів}
\date{}
\newcommand{\Addresses}{{
		\bigskip
		\footnotesize
		
		Д.~Селютін, \textsc{Кафедра фундаментальної математики Харківського національного університету імені В.Н. Каразіна}\par\nopagebreak
		\textit{Електронна пошта}: \texttt{d.seliutin@karazin.ua}
		
}}
\begin{document}
	\large
	\pagestyle{empty}
	\maketitle
	
	\begin{abstract}
		Дана коротка стаття містить побудову конструкції, яка узагальнює поняття похідної функції однієї змінної, із використанням теорії фільтрів. В роботі наведено нове поняття, продемонстровано, що воно дійсно узагальнює раніше відому концепцію похідної. Отримано властивості та правила диференціювання аналогічно до класичних правил диференціювання. Показано переваги нового поняття перед класичної похідною.
	\end{abstract}
	
	\section{Вступ}
	
	Нагадування: нехай $f: \R \rightarrow \R$ -- функція, неперервна в точці $x_0 \in \R$. Відомо, що похідною функції $f(x)$ в точці $x_0$ називають
	$$
	\frac{df}{dx}(x_0) = \lim\limits_{h \rightarrow 0} \frac{f(x_0+h) - f(x_0)}{h}.
	$$
	
	Широко відомі умови існування похідної, критерій диференційовності, правила диференціювання, таблиця похідних тощо. Дане поняття є одним із центральних (поруч із інтегралом) концепцій математичного аналізу. Похідна має широкий спектр застосувань не тільки в математичній науці (теорія функцій, теорія ймовірностей, диференціальна геометрія тощо), проте займає одну іх провідних ролей в прикладних дисциплінах: різноманітні розділи фізики, біологія, економіка та економічна статистика, наука про дані. Проте поруч із величезним спектром застосувань, звичайна похідна функції має кілька недоліків: по-перше, функція має бути неперервною в точці, по-друге, ''модулеподібні'' не мають похідної у своїх ''гострих'' точках. Наприклад, добре відомо, що функція $f(x) = |x|$ не диференційовна в точці $x_0 = 0$ в силу того, що односторонні похідні даної функції в даній точці є різними. Звісно, одностороння похідна певним чином вирішує цю проблему, проте для її застосування потрібні два різні  поняття ''правої'' та ''лівої'' похідних. Так, ці поняття дуже схожі та мають косметичні відмінності, проте це два різних поняття. В нещодавній статті \cite{sel_int} нами було запропоновано поняття визначеного інтеграла відносно фільтра. Стаття містить докладний опис схеми побудови інтеграла відносно фільтра, вивчено його властивості та переваги, порівняно зі звичайним інтегралом Рімана по відрізку. Визначення поняття похідної в термінах фільтрах -- це абсолютно логічний та природній крок, оскільки і похідна функції, і інтеграл Рімана -- концепції, які для своєї побудови вимагають ''граничного переходу''. Тому в наступних розділах даної короткої статті ми опишемо загальну конструкції похідної функції із використанням техніки фільтрів. Після цього ми порівняємо цю нову концепцію із відомим визначенням похідної і переконаємося в тому, що наша конструкція дійсно є узагальненням. Далі ми будемо вивчати властивості похідної за фільтром, зокрема, відповімо на питання: чи працює для похідної за фільтром правило диференціювання добутку?
	
	\section{Необхідні відомості із теорії фільтрів}
	
	Нагадаємо, що фільтром на непорожній множині $\Omega$ називають таку сім'ю підмножин $\F \subset 2^{\Omega}$, яка задовольняє наступним аксіомам:
	\begin{enumerate}
		\item $\emptyset \notin \F$;
		\item $A, B \in \F \Rightarrow A\cap B \F$;
		\item $D \subset \Omega,\ A \supset D \Rightarrow D \in \F$.
	\end{enumerate}
	
	Поруч із поняттям фільтра ми також будемо використовувати базу фільтра. Сім'ю $\B$ підмножин множини $\Omega$ називають базою фільтра, якщо 
	\begin{enumerate}
		\item $\emptyset \notin \B$;
		\item $\forall A, B \in \B \exists C \in B:\ C \subset A \cap B$.
	\end{enumerate}
	
	Говорять, що фільтр $\F$ породжено базою $\B$, якщо кожен елемент фільтра $\F$  містить хоча б один елемент бази фільтра $\B$. Через $\F(\B)$ будемо позначати фільтр, породжений базою $\B$.
	
	Останнє визначення, яке нам варто пригадати - -це визначення збіжності функції за фільтром. Отож нехай $X,Y$ -- топологічні простори, $f: X \rightarrow Y$ -- функція, $y  \in X$,  $\F$ -- фільтр на $X$. Говорять, що  функція $f$ збігається до $y$ за фільтром $\F$ (позначення: $y = \lim\limits_{\F} f$), якщо для довільного околу $U$ точки $y$ існує елемент фільтра $A \in \F$ такий, що $\{f(t):t \in A\} \subset U$.Більше про теорію можна прочитати, наприклад. у чудовому підручнику \cite{Kad}.
	
	\section{Похідна функції за фільтром}
	
	Нехай $f: \R \rightarrow \R$ -- функція, визначена в деякому околі точки $x_0 \in \R$. Розглянемо функцію:
	$$
	D_{x_0}(f,\cdot):\R \rightarrow \R,
	$$
	
	яка діє за наступним правилом:
	
	$$
	D_{x_0}(f,h) = \frac{1}{h} \left(f(x_0+h) - f(x_0)\right).
	$$
	
	Очевидно, що 
	
	$$
	\frac{df}{dx}(x_0) = \lim\limits_{h \rightarrow 0} D_{x_0}(f,h).
	$$
	
	Тепер ми готові дати центральне визначення даної статті.
	
	\begin{definition}\label{main_def}
		Нехай $f: \R \rightarrow \R$ -- функція, визначена в деякому околі точки $x_0 \in \R$. Нехай $\F$ -- фільтр на $\R$. \textit{Похідною функції $y=f(x)$ в точці $x_0$ відносно фільтра $\F$} будемо називати 
		\begin{equation}
			\frac{df}{d\F}(x_0) = \lim\limits_{\F} D_{x_0}(f,h),
		\end{equation}
	\end{definition}
	якщо дана границя існує.
	
	\begin{remark}
		Функції, у яких є похідна в даній точці за певним фільтром, будемо називати диференційовними за цим фільтром в цій точці.
	\end{remark}
	
	Покажемо тепер, що класичне визначення поняття похідної є частковим випадком Означення \ref{main_def}. Дійсно, для довільного $\delta > 0$ розглянемо $B_{\delta} = (x_0-\delta, x_0+\delta)\setminus {x_0}$. Нескладно перевірити, що сім'я множин $\B_{x_0} = (B_{\delta})_{\delta > 0}$ утворює базу фільтра. Позначимо
	
	$$
	\F_{x_0} = \F(\B_{x_0}).
	$$
	
	Очевидно, що в цьому випадку 
	$$
	\frac{df}{dx}(x_0) = \lim\limits_{\F_{x_0}} D_{x_0}(f,h).
	$$
	
	\section{Властивості похідної функції за фільтром}
	
	Для ''класичної'' похідної можна сформулювати добре відомі усім властивості: вона задовольняє властивостям лінійності та однорідності, визначено правила диференціювання добутку і частки двох функцій. В даному розділі ми будемо вивчати узагальнення цих властивостей для похідної функції відносно фільтра.
	
	\begin{theorem}
		Нехай $f,\ g: \R \rightarrow \R$ -- функції, визначені в деякому околі точки $x_0 \in \R$. Нехай $\F$ -- фільтр на $\R$, $\al, \beta \in \R$. Тоді
		\begin{equation}
			\frac{d(\al f+\beta g)}{d\F}(x_0) = \al \frac{df}{d\F}(x_0)+\beta \frac{dg}{d\F}(x_0),
		\end{equation}
	іншими словами, похідна за фільтром від суми двох функцій -- це сума похідних за фільтром від цих двох функцій.
	\end{theorem}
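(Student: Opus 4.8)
План доведення природно розпадається на два кроки: спершу — алгебраїчна тотожність для різницевого оператора $D_{x_0}$ при фіксованому значенні $h$, а потім — граничний перехід за фільтром $\F$, що спирається на лінійність операції взяття границі за фільтром.

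Спочатку я б зафіксував $h$ з проколотого околу нуля, у якому коректно визначені обидві функції $f$ і $g$ (тобто $x_0+h$ лежить у спільному околі означеності), і безпосередньо з означення оператора $D_{x_0}$ вивів тотожність
$$
D_{x_0}(\al f+\beta g, h) = \frac{1}{h}\Bigl((\al f+\beta g)(x_0+h)-(\al f+\beta g)(x_0)\Bigr) = \al\, D_{x_0}(f,h)+\beta\, D_{x_0}(g,h).
$$
Це суто обчислювальний крок; він використовує лише поточкову лінійність операцій над дійснозначними функціями і жодних припущень про границі не потребує.

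Далі я б скористався тим, що за умовою теореми існують границі $\lim\limits_{\F} D_{x_0}(f,h)$ та $\lim\limits_{\F} D_{x_0}(g,h)$, і застосував до щойно отриманої рівності стандартний факт: границя за фільтром у $\R$ лінійна, тобто якщо $\lim\limits_{\F} u = a$ і $\lim\limits_{\F} v = b$, то для будь-яких $\al,\beta\in\R$ існує $\lim\limits_{\F}(\al u+\beta v)$ і дорівнює $\al a+\beta b$. Звідси одночасно випливає існування лівої частини рівності, яку доводимо, і сама рівність.

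Отже, єдина змістовна складова — обґрунтування лінійності границі за фільтром (усе решта вкладається в один рядок). Це твердження доводиться цілком аналогічно до класичної теореми про границю суми: для довільного околу точки $\al a+\beta b$ підбираємо околи точок $a$ і $b$, для яких відповідна лінійна комбінація потрапляє в заданий окіл, знаходимо елементи фільтра, де $u$ і $v$ набувають значень у цих околах, і беремо їхній перетин, який належить $\F$ за другою аксіомою фільтра; випадок $\al=\beta=0$ розглядаємо окремо як тривіальний. Детальний виклад цих властивостей границі за фільтром є, наприклад, у \cite{Kad}. Саме це — хоча й рутинне — міркування я вважаю основною технічною частиною доведення.
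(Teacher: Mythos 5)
Ваше доведення є коректним і по суті збігається з тим, що має на увазі стаття: авторський коментар після формулювання теореми прямо каже, що доведення спирається на лінійність та однорідність границі за фільтром, а ви лише акуратно виписали цей аргумент (поточкова лінійність $D_{x_0}$ плюс лінійність $\lim\limits_{\F}$). Додаткових зауважень немає.
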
  
	Доведення цього факту спирається на застосування властивості лінійності та однорідності границі функції за фільтром (див. \cite{sel_int}). Краще ми зосередимо свою увагу на дослідженні правил диференціювання добутку і частки.
	
	Зараз ми дамо одне технічне визначення.
	
	\begin{definition}
		Нехай $f: \R \rightarrow \R$ -- функція, визначена в деякому околі точки $a \in \R$, $\F$ -- фільтр на $\R$. Будемо називати функцію $f$ \textit{$\F$-неперервною в точці $a$}, якщо $\lim\limits_{\F} f(a+h) = f(a)$.  
	\end{definition}
	
	\begin{theorem}
		Нехай $f,\ g: \R \rightarrow \R$ -- функції, визначені в деякому околі точки $x_0 \in \R$, $\F$ -- фільтр на $\R$. Нехай існують похідні функцій $f,\ g$ відносно фільтра $\F$ в точці $x_0$. Також нехай функції $f$ та  $g$ є $\F$-неперервними в точці $x_0$. Тоді $f \cdot g$ також є диференційовною в точці $x_0$ за фільтром $\F$, і 
		\begin{equation}
			\frac{d( f \cdot g)}{d\F}(x_0) = \frac{df}{d\F}(x_0) \cdot g(x_0) + \frac{dg}{d\F}(x_0) \cdot f(x_0).
		\end{equation}
	\end{theorem}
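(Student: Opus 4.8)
The plan is to reduce the claim to the arithmetic of limits along a filter --- linearity, homogeneity and the product rule for $\lim\limits_{\F}$, all established in \cite{sel_int} --- by expressing the difference quotient $D_{x_0}(f\cdot g,h)$ through $D_{x_0}(f,h)$, $D_{x_0}(g,h)$ and values of $f,g$ near $x_0$. Concretely, I would first write down the purely algebraic identity
\[
D_{x_0}(f\cdot g,h)=\frac12\Bigl(D_{x_0}(f,h)\,\bigl(g(x_0+h)+g(x_0)\bigr)+\bigl(f(x_0+h)+f(x_0)\bigr)\,D_{x_0}(g,h)\Bigr),
\]
which follows from $ab-cd=\frac12\bigl((a-c)(b+d)+(a+c)(b-d)\bigr)$ with $a=f(x_0+h)$, $c=f(x_0)$, $b=g(x_0+h)$, $d=g(x_0)$, after dividing by $h$. (An asymmetric decomposition such as $D_{x_0}(f\cdot g,h)=f(x_0+h)\,D_{x_0}(g,h)+g(x_0)\,D_{x_0}(f,h)$ serves the same purpose and in fact uses $\F$-continuity of only one of the two factors.)

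Next, I would pass to the limit along $\F$ termwise. By $\F$-continuity of $g$ we have $\lim\limits_{\F}\bigl(g(x_0+h)+g(x_0)\bigr)=2g(x_0)$, and by hypothesis $\lim\limits_{\F}D_{x_0}(f,h)=\frac{df}{d\F}(x_0)$ exists; hence, by the product rule for limits along a filter, the first summand converges to $\frac{df}{d\F}(x_0)\cdot g(x_0)$. Symmetrically, $\F$-continuity of $f$ together with the existence of $\frac{dg}{d\F}(x_0)$ gives that the second summand converges to $f(x_0)\cdot\frac{dg}{d\F}(x_0)$. Linearity and homogeneity of $\lim\limits_{\F}$ then deliver both the existence of $\lim\limits_{\F}D_{x_0}(f\cdot g,h)$ --- that is, the $\F$-differentiability of $f\cdot g$ at $x_0$ --- and the asserted formula.

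The only point that requires care, and the reason the two $\F$-continuity assumptions cannot be omitted, is that --- in contrast with the classical case --- $\F$-differentiability of $f$ at $x_0$ does not by itself imply $\lim\limits_{\F}f(x_0+h)=f(x_0)$: deducing it would need $\lim\limits_{\F}h=0$, i.e.\ convergence of the identity map to $0$ along $\F$, which is not built into the notion of a filter. So the "hard part" is not any estimate but the bookkeeping of which hypothesis is used where; once the displayed identity is in hand, the rest is exactly the limit algebra of \cite{sel_int}.
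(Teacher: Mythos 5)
Your argument is correct, but the decomposition you lead with is genuinely different from the one in the paper. The paper adds and subtracts $f(x_0)\cdot g(x_0+h)$ inside the difference quotient, arriving at the asymmetric identity $D_{x_0}(f\cdot g,h)=g(x_0+h)\,D_{x_0}(f,h)+f(x_0)\,D_{x_0}(g,h)$, and then applies the sum and product rules for $\lim_{\F}$, using $\F$-continuity only of $g$ to replace $\lim_{\F}g(x_0+h)$ by $g(x_0)$; a remark following the proof concedes that the hypothesis on $f$ is never actually invoked and is imposed only because one could equally well have chosen the mirror-image splitting (which is precisely the variant you mention parenthetically). Your symmetric identity built on $ab-cd=\frac12\bigl((a-c)(b+d)+(a+c)(b-d)\bigr)$ feeds into the same limit algebra but makes genuine, simultaneous use of both $\F$-continuity assumptions, so it matches the theorem's stated hypotheses more tightly at the cost of a slightly heavier formula; the paper's version is shorter and closer to the classical textbook computation. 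Your closing observation --- that for a general filter $\F$-differentiability of $f$ at $x_0$ does not imply $\lim_{\F}f(x_0+h)=f(x_0)$, because $\lim_{\F}h=0$ is not part of the definition of a filter --- is exactly the right explanation of why the $\F$-continuity hypotheses cannot be dropped, and it makes explicit a point the paper only gestures at.
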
  
	
	\begin{proof}
		Дійсно, 
				$$
				\frac{d( f \cdot g)}{d\F}(x_0) = \lim\limits_{\F} D_{x_0}(f\cdot g,h) = \lim\limits_{\F} \frac{1}{h} \left((f\cdot g)(x_0+h) - (f\cdot g)(x_0)\right) = 
				$$
				$$
				\lim\limits_{\F} \frac{1}{h} \left(f(x_0+h)\cdot g(x_0+h) - f(x_0)\cdot g(x_0)\right) = 
				$$
				\begin{align*}
				\lim\limits_{\F} \frac{1}{h} \left(f(x_0+h)\cdot g(x_0+h) + f(x_0)\cdot g(x_0+h) \right. \\
				\left. - f(x_0)\cdot g(x_0+h) - f(x_0)\cdot g(x_0)\right) = \\
				\lim\limits_{\F} \frac{1}{h} \bigg[g(x_0+h)\big(f(x_0+h) - f(x_0)\big) + f(x_0)\big(g(x_0+h) - g(x_0)\big)\bigg] = \\
				\lim\limits_{\F} \bigg[ \frac{g(x_0+h)\big(f(x_0+h) - f(x_0)\big)}{h} + \frac{f(x_0)\big(g(x_0+h) - g(x_0)\big)}{h}\bigg] = \\
				\lim\limits_{\F} \bigg[ \frac{g(x_0+h)\big(f(x_0+h) - f(x_0)\big)}{h}\bigg] + \lim\limits_{\F} \bigg[\frac{f(x_0)\big(g(x_0+h) - g(x_0)\big)}{h}\bigg] = \\
				\lim\limits_{\F} g(x_0+h)\cdot \lim\limits_{\F} \bigg[ \frac{\big(f(x_0+h) - f(x_0)\big)}{h}\bigg] +  f(x_0)\cdot \lim\limits_{\F} \bigg[ \frac{\big(g(x_0+h) - g(x_0)\big)}{h}\bigg] = \\
				\frac{df}{d\F}(x_0) \cdot g(x_0) + \frac{dg}{d\F}(x_0) \cdot f(x_0).
				\end{align*}
	\end{proof}
	
	\begin{remark}
		Навіщо ми ввели поняття $\F$-неперервності? Для того, аби у третьому знизу рядку цієї теореми мати змогу перейти до добутку границь. В умові теореми ми вимагали $\F$-неперервності від обох функцій, проте фактично використали цю особливості лише для функції $g$. Ми вимагаємо того самого і від функції $f$, оскільки в доведенні ми можемо додати і відняти інший доданок, а саме $g(x_0)\cdot f(x_0+h)$.
	\end{remark}
	
	Переходимо тепер до правила диференціювання за фільтри частки функцій.
	
	\begin{theorem}
		Нехай $f,\ g: \R \rightarrow \R$ -- функції, визначені в деякому околі точки $x_0 \in \R$, $\F$ -- фільтр на $\R$. Нехай існують похідні функцій $f,\ g$ відносно фільтра $\F$ в точці $x_0$. Також нехай функція $g$ є $\F$-неперервною в точці $x_0$, $g(x_0) \neq 0$. Тоді частка функцій $\bigg(f/g\bigg)$ також є диференційовною в точці $x_0$ за фільтром $\F$, і 
		\begin{equation}
			\frac{d \bigg(f/g\bigg)}{d\F}(x_0) = \frac{\frac{df}{d\F}(x_0)\cdot g(x_0) - \frac{dg}{d\F}(x_0)\cdot g(x_0)}{g^2(x_0)}
		\end{equation}
	\end{theorem}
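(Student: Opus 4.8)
\medskip

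\noindent\textbf{План доведення.} Підхід повністю повторює схему доведення правила диференціювання добутку. Ідея така: виписати різницеве відношення $D_{x_0}(f/g,h)$, звести два дроби до спільного знаменника, додати й відняти у чисельнику доданок $f(x_0)g(x_0)$, розбити отриманий вираз на суму двох доданків (один із множником $D_{x_0}(f,h)$, інший --- із $D_{x_0}(g,h)$) та перейти до границі за фільтром $\F$.

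Перший крок, який я б зробив, --- обґрунтувати коректність виразу. Оскільки $g$ є $\F$-неперервною в $x_0$ і $g(x_0)\neq 0$, то для околу $U=\{y\in\R:\ |y-g(x_0)|<|g(x_0)|/2\}$ знайдеться елемент $A\in\F$ такий, що $g(x_0+h)\in U$ для всіх $h\in A$; зокрема, $g(x_0+h)\neq 0$ та $|g(x_0+h)|>|g(x_0)|/2$ на $A$. Отже, функцію $h\mapsto \frac{1}{g(x_0+h)g(x_0)}$ визначено на $A$ і вона обмежена, а з $\F$-неперервності $g$ та неперервності ділення маємо $\lim\limits_{\F}\frac{1}{g(x_0+h)g(x_0)}=\frac{1}{g^2(x_0)}$.

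Далі йде основне обчислення:
\begin{align*}
D_{x_0}(f/g,h)&=\frac{1}{h}\left(\frac{f(x_0+h)}{g(x_0+h)}-\frac{f(x_0)}{g(x_0)}\right)=\frac{1}{h}\cdot\frac{f(x_0+h)g(x_0)-f(x_0)g(x_0+h)}{g(x_0+h)g(x_0)}\\
&=\frac{1}{g(x_0+h)g(x_0)}\left[g(x_0)\frac{f(x_0+h)-f(x_0)}{h}-f(x_0)\frac{g(x_0+h)-g(x_0)}{h}\right],
\end{align*}
де у передостанньому переході до чисельника додано й віднято $f(x_0)g(x_0)$. Переходячи до границі за $\F$: множник перед квадратними дужками прямує до $1/g^2(x_0)$, а вираз усередині дужок за лінійністю границі за фільтром і за означенням похідної за фільтром прямує до $g(x_0)\frac{df}{d\F}(x_0)-f(x_0)\frac{dg}{d\F}(x_0)$; звідси за правилом границі добутку за фільтром дістаємо $\frac{1}{g^2(x_0)}\Big(\frac{df}{d\F}(x_0)\,g(x_0)-\frac{dg}{d\F}(x_0)\,f(x_0)\Big)$ --- зауважу, що у формулі твердження в другому доданку чисельника мало б стояти $f(x_0)$, а не $g(x_0)$.

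Головна перешкода --- акуратно обґрунтувати, що різницеве відношення частки визначене на деякому елементі фільтра і що знаменник там відокремлений від нуля; саме тут істотно використовується $\F$-неперервність $g$ у парі з умовою $g(x_0)\neq 0$. Решта --- рутинне застосування лінійності та мультиплікативності границі за фільтром, цілком як у попередній теоремі; $\F$-неперервність функції $f$ при цьому не потрібна.
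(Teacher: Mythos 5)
Your proof follows essentially the same route as the paper's: reduction to a common denominator, adding and subtracting $f(x_0)\cdot g(x_0)$ in the numerator, splitting into two terms, and using the $\F$-continuity of $g$ to replace $g(x_0+h)$ by $g(x_0)$ in the limit. You go beyond the paper in two useful ways: you justify that $g(x_0+h)$ is bounded away from zero on an element of $\F$, so the difference quotient of $f/g$ is actually defined there (the paper silently assumes this), and you correctly observe that the displayed formula in the statement contains a typo --- the second term of the numerator should be $\frac{dg}{d\F}(x_0)\cdot f(x_0)$ rather than $\frac{dg}{d\F}(x_0)\cdot g(x_0)$, as the paper's own intermediate computation confirms.
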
  
	
	\begin{proof}
		Дійсно,
			\begin{align*}
			\frac{d \bigg(f/g\bigg)}{d\F}(x_0) = 	\lim\limits_{\F} \frac{1}{h}\bigg[\left(\frac{f}{g}\right)(x_0+h) - \left(\frac{f}{g}\right)(x_0)\bigg] = \\
			\lim\limits_{\F} \frac{1}{h}\bigg[\frac{f(x_0+h)}{g(x_0+h)} - \frac{f(x_0)}{g(x_0)}\bigg] = \lim\limits_{\F} \frac{1}{h}\bigg[\frac{g(x_0)\cdot f(x_0+h) - f(x_0)\cdot g(x_0+h))}{g(x_0)\cdot g(x_0+h)}\bigg].\\
			\end{align*}
		В чисельнику додамо та віднімемо вираз $g(x_0)\cdot f(x_0)$. Отримаємо
			\begin{align*}
			\lim\limits_{\F} \frac{1}{h}\bigg[\frac{g(x_0)\cdot f(x_0+h) - g(x_0)\cdot f(x_0) + g(x_0)\cdot f(x_0)- f(x_0)\cdot g(x_0+h))}{g(x_0)\cdot g(x_0+h)}\bigg] = \\
			\lim\limits_{\F} \frac{1}{h}\bigg[\frac{g(x_0)\big(f(x_0+h) - f(x_0)\big) - f(x_0)\big(g(x_0+h)-g(x_0)\big)}{g(x_0)\cdot g(x_0+h)}\bigg] = \\
			\lim\limits_{\F} \bigg[\frac{g(x_0)\big(f(x_0+h) - f(x_0)\big)}{h \cdot g(x_0)\cdot g(x_0+h)} -\frac{ f(x_0)\big(g(x_0+h)-g(x_0)\big)}{h \cdot  g(x_0)\cdot g(x_0+h)}\bigg] = \\
			\lim\limits_{\F} \bigg[\frac{g(x_0)\cdot \frac{f(x_0+h) - f(x_0)}{h}}{g(x_0)\cdot g(x_0+h)} -\frac{ f(x_0)\cdot \frac{g(x_0+h) - g(x_0)}{h}}{g(x_0)\cdot g(x_0+h)}\bigg].
		\end{align*}
	Оскільки функція $g$ є $\F$-неперервною в точці $x_0$, маємо:
	\begin{align*}
		\lim\limits_{\F} \frac{g(x_0)\cdot \frac{f(x_0+h) - f(x_0)}{h}}{g(x_0)\cdot g(x_0)}  - 	\lim\limits_{\F} \frac{ f(x_0)\cdot \frac{g(x_0+h) - g(x_0)}{h}}{g(x_0)\cdot g(x_0)}=\\
		\frac{g(x_0)\cdot \frac{df}{d\F}(x_0)}{g^2(x_0)}  - 	\frac{ f(x_0)\cdot \frac{dg}{d\F}(x_0)}{g^2(x_0)}=\\
		\frac{\frac{df}{d\F}(x_0)\cdot g(x_0) - \frac{dg}{d\F}(x_0)\cdot g(x_0)}{g^2(x_0)}
	\end{align*}
	\end{proof}
	
	\begin{remark}
		Таким чином, ми бачимо, що для похідної функції відносно фільтра мають місце всі властивості, які були притаманні класичній похідній.
	\end{remark}
	
	Розглянемо ще один пункт, який ми трохи згадали у вступній частині: диференціювання функції $f(x) = |x|$ в точці $x_0 = 0$. Так, в цій точці модуль не має похідної у звичайному розумінні цього слова, проте має односторонні похідні. Проте тепер, маючи такий інструмент, як похідна функції за фільтром, обчислення похідної даної функції в даній точці стає набагато елегантнішим. Дійсно, розглянемо сім'ї множин
	$$
	\B_{0+} = ((0,\delta))_{\delta>0},\ \B_{0-} = ((-\delta,0))_{\delta>0}.
	$$
	
	Очевидно, що ці сім'ї утворюють бази фільтрів.
	
	Тоді
	$$
	\frac{df}{d\F(\B_{0+} )}(0) = 1,\ \frac{df}{d\F(\B_{0-} )}(0) = -1.
	$$
	
	\vspace{15mm}
	
	\textbf{Подяки\\}
	Статтю підготовлено в рамках виконання держбюджетної теми Міністерства освіти і науки України № БФ/32-2021(11). Також автор висловлює безмежні слова вдячності своїм батькам за постійну підтримку. Окремі слова вдячності хочеться висловити Силам Безпеки та Оборони України, які в даний момент дають рішучу відсіч російським окупантам.

	\newpage

	\Addresses

\end{document}